\renewcommand{\baselinestretch}{1.3}
\newtheorem{thm}{Theorem}[section]
\newtheorem{lemma}[thm]{Lemma}
\newtheorem{prop}[thm]{Proposition}
\newtheorem{defin}[thm]{Definition}
\theoremstyle{remark}
\newtheorem{rem}[thm]{Remark}
\newtheorem{ex}[thm]{Example}
\newenvironment{remark}{\begin{rem}\rm}{\qee\end{rem}}
\newenvironment{example}{\begin{ex}\rm}{\qee\end{ex}}
\newcommand{\cO}{{\mathcal O}}
\newcommand{\CC}{{\mathcal C}^\infty}
\newcommand{\id}{\operatorname{id}}
\newcommand{\rk}{{\mbox{rk}\,}}
\newcommand{\C}{{\mathbb C}}
\newcommand{\A}{{\mathscr  A}}
\newcommand{\Lie}{{\mathcal L}}
\newcommand{\vf}{{{\mathfrak X}(M)}}
\newcommand{\qee}{\mbox{\hspace{0.2mm}}\hfill$\triangle$}
\begin{document}
\begin{flushright}SISSA Preprint 15/2010/fm\\ \tt arXiv:1003.1823 [math.CV]\end{flushright}
\title[Skew-holomorphic Lie algebroids]{\large Cohomology of skew-holomorphic\\[8pt]  Lie algebroids}
\bigskip
\maketitle \thispagestyle{empty} \vspace{-3mm}
\begin{center}{\sc Ugo Bruzzo} \\ {\small
Scuola Internazionale Superiore di Studi Avanzati,\\ Via Beirut 2-4, 34013
Trieste, Italia; \\ Istituto Nazionale di Fisica Nucleare, Sezione di Trieste \\ E-mail: {\tt bruzzo@sissa.it} }
\\[6pt]
{\sc Vladimir  Rubtsov} \\ {\small
Universit\'e d'Angers, D\'epartement de Math\'ematiques,\\
UFR Sciences, LAREMA, UMR 6093 du CNRS,\\
2 bd.~Lavoisier, 49045 Angers Cedex 01, France; \\
 ITEP Theoretical Division, \\ 25, Bol.~Tcheremushkinskaya str., 117259, Moscow, Russia
\\ E-mail: {\tt Volodya.Roubtsov@univ-angers.fr}}
\end{center}
\par
\bigskip

\vfill

\begin{abstract}  We introduce the notion of \emph{skew-holomorphic Lie algebroid}
on a complex manifold, and explore some cohomologies theories
that one can associate to it. Examples are given in terms
of holomorphic Poisson structures of various sorts. \end{abstract}

\vfill
\parbox{.75\textwidth}{\hrulefill}\par
\noindent \begin{minipage}[c]{\textwidth}\parindent=0pt \renewcommand{\baselinestretch}{1.2}
\small
{\em Date:} 6 March 2010, revised 19 July 2010  \par \small
{\em 2000 Mathematics Subject Classification:}  32C35, 53D17, 55N30\par\smallskip
{\em Keywords:}  Holomorphic Lie algebroid, matching pair of Lie algebroids, Lie algebroid cohomology, Holomorphic Poisson cohomology\par\smallskip

The authors gratefully acknowledge
financial support and hospitality during the respective visits to
Universit\'e d'Angers and {\sc sissa}. Support for this work was also provided by {\sc misgam} (Methods of Integrable Systems, Geometry, Applied Mathematics), by   the {\sc infn}~project {\sc pi14} ``Nonperturbative dynamics of gauge theories", the {\sc einstein} Italo-Russian project  ``Integrability in topological string and field theory,'' and the {\sc matpyl}   Angers-{\sc sissa} project
``Lie algebroids, equivariant cohomology, and topological quantum field and string theories.''
\end{minipage}

\newpage

\section{Introduction}\label{intro}
The complex structure of a complex manifold $X$ gives rise to a rich cohomological
structure; one has the Dolbeault cohomology, the holomorphic de Rham
cohomology, and these relate in a nontrivial way to the (usual) de Rham
cohomology of $X$. A complex structure for $X$ may be regarded as an  integrable
decomposition 
\begin{equation} T_X\otimes \C = T^{1,0}_X\oplus T^{0,1}_X \label{split}\end{equation}
with the condition $\overline{T^{1,0}_X} = T^{0,1}_X$.

On the other hand one has the notion of Lie algebroid; loosely speaking (the precise
definition is recalled below),  one has a vector bundle morphism $a\colon A \to T_X$
with a lift of the Lie algebra structure on the sections of $T_X$ to a Lie algebra structure on the sections of $A$. One can then think of lifting the decomposition \eqref{split} as well.
In this paper we analyze the cohomological theory arising from such a structure. In particular,
we consider Lie algebroids that are obtained by ``matching'' --- in a specific technical sense that
we shall recall in the body of the paper --- a holomorphic Lie algebroid $\A_1$ with the complex
conjugate of another holomorphic Lie algebroid $\A_2$. We call the structure obtained
in this way a \emph{skew-holomorphic} Lie algebroid. A particular case of this
construction is presented in the paper \cite{GSX}, where $\A_2$ is assumed to be
the holomorphic tangent bundle to $X$.

The cohomology theory of skew-holomorphic algebroids turns out to be quite rich.
This paper is devoted to explore it. After recalling some basic definitions in Section \ref{preli},
in Section \ref{skew} we review the notions of \emph{representation of a Lie algebroid},
of \emph{matched pair of Lie algebroids}, and introduce the new concepts
of \emph{almost complex structure on a Lie algebroid}, and of \emph{skew-holomorphic
Lie algebroid.} In Section \ref{cohom} we give our main theorem about the cohomological
structure of such Lie algebroids, and in the final Section \ref{examples} we provide some examples,
basically related to various  holomorphic  Poisson cohomologies.

\medskip
{\bf Acknowledgments.} This paper has been mostly written while U.B~was visiting the Department of Physics of Rutgers University.  He gratefully acknowledges the Department's  hospitality
and warm welcome.  The second author thanks LPTM of the Cergy-Pontoise University for hospitality during his CNRS delegation while the paper was prepared. Part of the results was presented by V.R.~during
the Conference ``Differential Equations and Topology'' in the Institute of Mathematics, National Academy of Sciences of Ukraine. He is thankful to the CNRS PICS project ``Probl\`emes
de Physique Math\'ematique'' (France -Ukraine) for supporting his participation.

 We thank Yvette Kosmann-Schwarzbach and Mathieu Sti\'enon for useful discussions.


\bigskip

\section{Preliminaries}\label{preli}
\subsection{Lie algebroids.} We start by recalling the notions of Lie algebroid and Lie algebroid
cohomology. 
Let $M$ be a smooth manifold, $T_M$ its tangent bundle, and let $\vf$ be the space of vector fields on $M$ equipped
with the usual Lie bracket $[\,,]$.
\begin{defin} An algebroid $A$ over $M$ is a vector bundle
on $M$ together with a vector bundle morphism
$a\colon A \to T_M$ (called the anchor) and a structure of Lie algebra
on the space of global sections $\Gamma(A)$, such that
\begin{enumerate}
\item $a\colon\Gamma(A)\to \vf $ is a Lie algebra
homomorphism;
\item the following Leibniz rule holds true for every $\alpha$, $\beta\in
\Gamma(A)$ and every function $f$:
$$\{\alpha,f\beta\}=f\{\alpha,\beta\}+a(\alpha)(f)\,\beta$$
(we denote by $\{\,,\}$ the bracket in $\Gamma(A)$).
\end{enumerate}
The Lie algebroid $A$ is said to be \emph{transitive} if the anchor $a$ is
surjective.
\end{defin}
Morphisms between two Lie algebroids
$(A,a)$ and $(A',a')$ on the same base manifold $M$ are defined in a natural way, i.e.,
they are vector bundle morphisms $\phi\colon A \to A'$ such
that the map   $\phi\colon\Gamma(A)\to\Gamma(A')$ is
a Lie algebra homomorphism, and the   diagram
$$\xymatrix{A \ar[r]^\phi\ar[rd]_a & A'\ar[d]^{a'} \\ & T_M}$$ commutes.

\begin{example} An interesting example of  a transitive Lie algebroid is the \emph{Atiyah algebroid}
of a vector bundle $E$ on $M$. This is the bundle $\mathcal D(E)$
of the first-order differential operators on $E$ with scalar symbol. The   anchor
$\sigma\colon\mathcal D(E)\to T_M$ is the symbol map. Moreover,
$\ker(\sigma)\simeq End(E)$. \end{example}

 To any Lie algebroid $A$ one can associate the cohomology complex
$(C^\bullet_A,\delta)$, with $C^\bullet_A=\Gamma(\Lambda^\bullet A^\ast)$
and differential $\delta $ defined by  \cite{ELW99}
\begin{multline}\label{diff}
(\delta \xi)(\alpha_1,\dots,\alpha_{p+1}) =
\sum_{i=1}^{p+1}(-1)^{i-1}a(\alpha_i)(\xi(\alpha_1,\dots,\hat\alpha_i,
\dots,\alpha_{p+1})) \\ + \sum_{i<j}(-1)^{i+j}
\xi(\{\alpha_i,\alpha_j\},\dots,\hat\alpha_i,\dots,\hat\alpha_j,\dots,\alpha_{p+1})
\end{multline}
if $\xi\in C^p_A$ and $\alpha_i \in \Gamma(A), 1\leq i \leq p+1$. The resulting cohomology is denoted by
$H^\bullet(A)$ and is called the cohomology of the Lie algebroid $A$.

A similar definition may given in the case of a \emph{complex Lie algebroid},
where $A$ is a complex vector bundle, and $T_M$ is replaced by its
complexification $T_M\otimes\C$. Analogously, one has a notion of 
\emph{holomorphic Lie algebroid} on a complex manifold $X$, where $A$ is a holomorphic
vector bundle (that we shall denote by $\A$), $T_M$ is replaced by
the holomorphic tangent bundle $\Theta_X$, and one requires that $\A$
has a structure of sheaf of Lie algebras, satisfying a suitable Leibniz rule.

 \bigskip
\section{Skew-holomorphic Lie algebroids}\label{skew}
We shall need some results on the cohomology of holomorphic Lie algebroids.
The following theory generalizes the construction given in \cite{GSX}. Even though
we shall not need this theory in its full generality, it seems reasonable to expound it
in that form.

Let  $X$ be an $n$-dimensional compact complex manifold. We shall denote by $\Theta_X$ its holomorphic tangent bundle and by $T_X$ its tangent bundle when $X$ is regarded as a $2n$-dimensional smooth differentiable manifold. $\Omega^i_X$ will denote the bundle of holomorphic $i$-forms on $X$.  

\subsection{Almost complex Lie algebroids} There is a very natural way of extending the notion of almost complex manifold to that of almost complex Lie algebroid (this generalizes the notion of almost complex Poisson manifold given in the paper \cite{Cordero00}, to which we refer the reader for examples). Let $M$ be an almost complex manifold, with almost complex structure
$J_M\colon T_M\to T_M$.
\begin{defin} An almost complex structure $J_A$ on a real Lie algebroid
$A\stackrel{a}{\to}T_M$ is a vector bundle endomorphism $J_A\colon A\to A$ such that
$J_A^2=-\id_A$, and $J_M\circ a = a\circ J_A$.
\end{defin}
As usual we have a splitting
$$A\otimes\C = A^{1,0}\oplus A^{0,1}$$
according to the eigenvalues $\pm i$ of $J_A$. We shall set
$$\lambda_A^{p,q} = \Lambda^p(A^\ast)^{1,0}\oplus \Lambda^q(A^\ast)^{0,1}\,.$$
We set $C_A^{p,q} = \Gamma(\lambda_A^{p,q})$; 
the differential $d_A$ of the complex $C_A^\bullet = \bigoplus_{p,q}C_A^{p,q}$
splits into
$$d_A=\partial'_A+\partial_A+\bar\partial_A+\partial''_A$$
where 
$$\partial'_A\colon C_A^{p,q}\to C_A^{p+2,q-1},\qquad \partial_A\colon C_A^{p,q}\to C_A^{p+1,q}\,,$$
$$\bar\partial_A\colon C^{p,q}\to C^{p,q+1},\qquad \partial''_A\colon C^{p,q}\to C^{p-1,q+2}$$
and the identities
$$\partial_A^2=[\partial'_A,\bar\partial_A],\quad  \bar\partial_A^2=[\partial''_A,\partial_A],
\quad [\partial_A,\partial''_A]+[\partial_A,\bar\partial_A]=0$$
$$(\partial'_A)^2=(\partial''_A)^2=[\partial_A,\partial'_A]=[\bar\partial,\partial''_A]=0$$
hold. The differential complex $(C_A^\bullet,d_A)$
admits a (regular) filtration
\begin{equation}\label{filtration}F_pC_A^k = \bigoplus_{r+s=k \atop r\ge p-k} C_A^{r,s}\,.\end{equation}
A straightforward generalization of the analysis performed in \cite{LycRub94} shows the 
following.
\begin{prop} The spectral sequence associated with the filtration \eqref{filtration}
of the differential complex $(C_A^\bullet,d_A)$ converges
to the complexified cohomology of the Lie algebroid $A$, i.e., to the cohomology
$H^\bullet(A)\otimes\C$.
\end{prop}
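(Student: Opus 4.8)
The plan is to identify $(C_A^\bullet,d_A)$ with the complexification of the Lie algebroid cochain complex $(\Gamma(\Lambda^\bullet A^\ast),\delta)$, to read off the abutment of the spectral sequence as the total cohomology of this complex, and then to invoke the standard convergence theorem for a bounded filtered complex. The only substantive verification is that the filtration \eqref{filtration} is stable under $d_A$.

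First I would pin down the abutment. The eigenbundle decomposition $A\otimes\C=A^{1,0}\oplus A^{0,1}$ dualizes and exteriorizes to a canonical isomorphism $\Lambda^k(A^\ast)\otimes\C\simeq\bigoplus_{p+q=k}\lambda_A^{p,q}$; passing to sections gives $\bigoplus_{p+q=k}C_A^{p,q}=\Gamma(\Lambda^kA^\ast)\otimes\C$, so the total complex $C_A^\bullet$ is exactly the complexification of the Lie algebroid cochain complex. Under this identification $d_A$ is the $\C$-linear extension $\delta\otimes\C$ of the differential \eqref{diff}, and $\partial'_A,\partial_A,\bar\partial_A,\partial''_A$ are simply its four bihomogeneous components. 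Since $\C$ is flat over $\R$, cohomology commutes with complexification, whence $H^\bullet(C_A^\bullet,d_A)=H^\bullet(A)\otimes\C$. This is the claimed limit; it remains to show that the spectral sequence of \eqref{filtration} computes it.

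Next I would check that \eqref{filtration} is a filtration by subcomplexes. Writing the defining condition $r\ge p-k$ (with $r+s=k$) in the equivalent form $2r+s\ge p$, one sees that $F_pC_A^\bullet$ is the span of those $C_A^{r,s}$ whose weight $w:=2r+s$ satisfies $w\ge p$, so that $F_{p+1}C_A^\bullet\subseteq F_pC_A^\bullet$ is a decreasing filtration. From the bidegrees listed before the proposition, the four components of $d_A$ act on $w$ as follows: $\partial''_A$ (bidegree $(-1,2)$) preserves $w$, while $\bar\partial_A$, $\partial_A$, $\partial'_A$ (bidegrees $(0,1),(1,0),(2,-1)$) raise $w$ by $1$, $2$, $3$ respectively. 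Each component therefore carries weight $\ge p$ to weight $\ge p$, so $d_A(F_pC_A^\bullet)\subseteq F_pC_A^\bullet$ and \eqref{filtration} is indeed a filtration of the differential complex.

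Finally, convergence is automatic: $A$ has finite rank, so the bigrading is bounded ($0\le r\le\rk A^{1,0}$ and $0\le s\le\rk A^{0,1}$) and in each total degree $k$ the filtration $F_\bullet C_A^k$ is finite, hence regular. The standard convergence theorem for the spectral sequence of a bounded filtered complex then yields convergence to the cohomology of the total complex, which by the first step is $H^\bullet(A)\otimes\C$. The genuine content is thus concentrated in the stability check of the previous paragraph, and I expect the main (indeed the only real) obstacle to be the recognition that the correct filtration degree is the weight $2r+s$ rather than the holomorphic degree $r$ alone—the latter being violated by $\partial''_A$, which lowers $r$. It is exactly the two off-diagonal pieces $\partial'_A$ and $\partial''_A$, present because $J_A$ need not be integrable, that force this choice of filtration and separate the present setting from the classical holomorphic one treated in \cite{LycRub94}.
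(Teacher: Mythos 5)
Your proof is correct, and it supplies exactly the verification that the paper delegates to a one-line citation (``a straightforward generalization of the analysis performed in \cite{LycRub94}''): the crux is indeed that the filtration \eqref{filtration} is the weight filtration by $2r+s\ge p$, which all four components $\partial'_A,\partial_A,\bar\partial_A,\partial''_A$ preserve or increase, so that $d_A(F_p)\subseteq F_p$, after which boundedness of the bigrading gives regularity and convergence to the cohomology of the total complex, i.e.\ to $H^\bullet(A)\otimes\C$. No gaps; your closing remark that the off-diagonal pieces $\partial'_A$ and $\partial''_A$ are what force the weight $2r+s$ rather than $r$ alone is precisely the point of the non-holonomic filtration of \cite{LycRub94}.
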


Now let  $X$ be an $n$-dimensional   complex manifold.  

\begin{defin} \label{integ} An almost complex structure on a Lie algebroid $A$ on $X$ is
said to be integrable if  there exists a holomorphic Lie algebroid $\A$ such
that 
\begin{enumerate} \item $A^{1,0}\simeq \CC_X\otimes_{\cO_X}\A$ as sheaves of $\CC_X$-modules; 
\item under this isomorphism, the bracket of $A\otimes\C $ restricts to the bracket of $\A$;
\item the anchor $ a\colon A\otimes \C \to T^\C_X$ coincides with $\tilde a$ on $\A$.
\end{enumerate}
\end{defin}
In this case, we shall call $\A$ the \emph{holomorphic structure} of $A$. 
Note that the Lie algebroid differentials $d_A$ of $A$ and $\partial_A$ of $\A$ are related by
$$d^{1,0}_A(f\otimes\xi) = f\otimes \partial_A\xi+ a^\ast(\partial f)\wedge\xi$$
$$d^{\,0,1}_A(f\otimes\xi) = a^\ast(\bar\partial f)\wedge\xi$$
if $f$ is a smooth function, and $\xi\in \Lambda^\bullet\A^\ast$.

The integrability of an almost complex structure $J_A$ on a Lie algebroid can as usual
be detected by using a suitable Nijenhuis tensor. One defines an element
$N_A\in \Gamma(\Lambda^2(A^\ast)\otimes A)$
$$N_A(\alpha,\beta) = \{\alpha,\beta\} +J_A\{J_A\alpha,\beta\} +\{\alpha,J_A\beta\}- \{J_A\alpha,\beta\} $$
and shows that $J_A$ is integrable if and only if $N_A=0$. 

Assuming that $A$ admits a complex structure $\A$, 
 let $\Omega_\A ^k=\Lambda^k\A ^\ast$ and denote by $\partial_\A $ the differential of the Lie algebroid $\A $. So we have a complex of sheaves on $X$
\begin{equation}\label{cA}
\Omega_\A ^0 \stackrel{\partial_\A }{\to} \Omega_\A ^1  \stackrel{\partial_\A }{\to} \Omega_\A ^2 \dots
\end{equation}

Let $\partial_\A$, $d_A$ be the differentials of the Lie algebroids $\A$ and $A$, and set
set $\Omega^\bullet_\A=\Lambda^\bullet\A^\ast$. We have an injection
$\Omega^\bullet_\A\hookrightarrow \Lambda^\bullet A^\ast\otimes\C$.
\begin{lemma} If $A$ admits a holomorphic structure $\A$, then  $d_A$ restricts to $\partial_A$ on $\Omega^\bullet_\A$.
\end{lemma}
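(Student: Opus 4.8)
The plan is to realise $\Omega^\bullet_\A$ inside the complexified complex $C_A^\bullet$ as the constant-coefficient $(\bullet,0)$-elements and then to compute $d_A$ on them directly from the two structure formulas recorded after Definition \ref{integ}. Dualising the isomorphism $A^{1,0}\simeq\CC_X\otimes_{\cO_X}\A$ and passing to exterior powers identifies $\Lambda^k(A^\ast)^{1,0}$ with $\CC_X\otimes_{\cO_X}\Lambda^k\A^\ast$; under this identification a holomorphic section $\xi\in\Omega^k_\A=\Lambda^k\A^\ast$ becomes the element $1\otimes\xi\in C_A^{k,0}$, and the injection $\Omega^\bullet_\A\hookrightarrow\Lambda^\bullet A^\ast\otimes\C$ is exactly the inclusion of these $f=1$ elements. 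So it suffices to evaluate $d_A(1\otimes\xi)$.

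First I would note that, $A$ being integrable, $N_A=0$ forces the bidegree-shifting components $\partial'_A$ and $\partial''_A$ to vanish, so that $d_A=\partial_A+\bar\partial_A$. (On a $(k,0)$-element the $\partial'_A$-term lands in $C_A^{k+2,-1}=0$ for free, but the $\partial''_A$-term a priori lands in $C_A^{k-1,2}$, and its vanishing genuinely uses integrability.) I would then apply the two formulas with $f=1$: since $\partial 1=\bar\partial 1=0$, the second gives $\bar\partial_A(1\otimes\xi)=a^\ast(\bar\partial 1)\wedge\xi=0$, so no antiholomorphic part appears, while the first gives $\partial_A(1\otimes\xi)=1\otimes\partial_\A\xi$. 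Combining, $d_A(1\otimes\xi)=1\otimes\partial_\A\xi$, again a constant-coefficient element of $\Omega^{k+1}_\A$; this says precisely that $d_A$ restricts to $\partial_\A$ (equivalently, to the $(1,0)$-component $\partial_A$) on $\Omega^\bullet_\A$.

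The one point that deserves care --- and where I expect the real content to sit --- is the assertion that the output $1\otimes\partial_\A\xi$ remains in the \emph{holomorphic} subsheaf $\Omega^{k+1}_\A$, and not merely in the smooth $(k+1,0)$-forms $C_A^{k+1,0}$. This is exactly what the integrability hypothesis buys: the vanishing of $\bar\partial_A(1\otimes\xi)$ eliminates the $(k,1)$-component that would otherwise destroy holomorphicity, while the fact that $\partial_\A$ is the differential of a bona fide holomorphic Lie algebroid ensures $\partial_\A\xi$ is again a holomorphic section of $\Lambda^{k+1}\A^\ast$. Granting the two structure formulas (already established in the excerpt), everything else is the bookkeeping identification of $\Omega^\bullet_\A$ with the $f=1$ elements, after which the computation is immediate.
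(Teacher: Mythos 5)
Your reduction of the lemma to the two displayed formulas
$$d^{1,0}_A(f\otimes\xi) = f\otimes \partial_\A\xi+ a^\ast(\partial f)\wedge\xi,\qquad
d^{\,0,1}_A(f\otimes\xi) = a^\ast(\bar\partial f)\wedge\xi$$
is where the argument begs the question. Those formulas are only \emph{asserted} in the text (in the ``Note'' following Definition \ref{integ}); they are not proved there, and they are not weaker than the statement to be proved: setting $f=1$ in them \emph{is} the lemma, and conversely the lemma together with the Leibniz rule for $d_A$ yields the formulas. So ``granting the two structure formulas (already established in the excerpt)'' amounts to granting the conclusion. The substantive content of the lemma is the verification that the intrinsic Cartan-type formula \eqref{diff} defining $d_A$, when evaluated on a section $\xi$ of $\Omega^\bullet_\A$ against arguments of the form $1\otimes\alpha$ with $\alpha\in\Gamma(\A)$, reproduces the formula defining the differential of $\A$ --- using that the anchor of $A\otimes\C$ coincides with $\tilde a$ on $\A$ and that the bracket of $A\otimes\C$ restricts to that of $\A$ (conditions (ii) and (iii) of Definition \ref{integ}). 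That computation never appears in your write-up. The paper carries it out explicitly in degrees $0$ and $1$ and then concludes by the Leibniz formula; that is the actual proof.

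Two smaller points. Your bookkeeping is correct and worth keeping: identifying $\Omega^k_\A$ with the constant-coefficient elements $1\otimes\xi$ of $C_A^{k,0}$, and observing that on a $(k,0)$-element the component $\partial'_A$ dies for degree reasons while killing $\partial''_A$ genuinely uses integrability. And your closing worry --- whether $1\otimes\partial_\A\xi$ lands in the holomorphic subsheaf rather than merely in the smooth $(k+1,0)$-part --- is settled precisely by the omitted computation, since it exhibits $d_A(1\otimes\xi)$ as $1\otimes\partial_\A\xi$ with $\partial_\A\xi$ computed by the holomorphic Lie algebroid structure of $\A$. If you replace the appeal to the unproven formulas by the degree-$0$ and degree-$1$ evaluations of \eqref{diff} followed by Leibniz, your argument becomes the paper's.
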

\begin{proof} 
If $f$ is a holomorphic function, and $\alpha\in\Gamma(\A)$,
$$d_A(f)(1\otimes\alpha)=a(1\otimes\alpha)(f)=\tilde a(\alpha)(f)=\partial_A(f)(\alpha)$$
so that the claim is true in degree zero. If $\xi\in\Gamma(\Omega^1_\A)$, then
\begin{eqnarray*} d_A(1\otimes\xi)(1\otimes\alpha,1\otimes\beta)  & = &  \tilde a(\alpha)(\xi(\beta))-
\tilde a(\beta)(\xi(\alpha))-\xi(\{\alpha,\beta\})   \\ & = & (1\otimes \partial_A\xi)(1\otimes\alpha,1\otimes\beta) 
\end{eqnarray*}
so that the claim is true in degree 1 as well. By the Leibniz formula one concludes.
\end{proof}

\subsection{The tangential complex of a regular holomorphic Lie algebroid} \label{tangential}
One says that a holomorphic  Lie algebroid $a\colon\A\to\Theta_X$ is
{\em regular} if the anchor $a$ has constant rank all over $X$. In this case the image
$\mathscr D$ of $\A$ in $\Theta_X$ is an involutive holomorphic subbundle of $\Theta_X$,
which is pointwise tangential to a regular holomorphic foliation $\mathcal F$ in $X$.
The differential $\partial_A$ of the sheaf complex $\Lambda^\bullet \A^\ast$ restricts
to a differential $\partial_{\mathscr D}\colon \Lambda^\bullet\mathscr D^\ast\to \Lambda^{\bullet+1}\mathscr D^\ast$. 

\begin{prop}\label{locex} \begin{enumerate} 
\item The kernel of $\partial_{\mathscr D}\colon \cO_X\to \mathscr D^\ast$
is the sheaf $\cO_{\mathcal F}$ of holomorphic functions on $X$ that are locally constant along the
leaves of $\mathcal F$;
\item the sheaf complex $( \Lambda^\bullet\mathscr D^\ast,\partial_{\mathscr D})$ is exact in positive degree;  \item there is an isomorphism $H^k(X,\cO_{\mathcal F})\simeq\mathbb H^k(X,
\Lambda^\bullet\mathscr D^\ast)$ for all $k\ge 0$.
\end{enumerate}
\end{prop}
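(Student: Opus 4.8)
The plan is to identify $(\Lambda^\bullet\mathscr D^\ast,\partial_{\mathscr D})$ with the leafwise holomorphic de Rham complex of $\mathcal F$ and to show that it resolves $\cO_{\mathcal F}$: statements (i) and (ii) together assert exactly that the augmented complex $0\to\cO_{\mathcal F}\to\cO_X\xrightarrow{\partial_{\mathscr D}}\mathscr D^\ast\xrightarrow{\partial_{\mathscr D}}\Lambda^2\mathscr D^\ast\to\cdots$ is exact, after which (iii) is formal. For (i), I would use that on functions the restricted differential reads $(\partial_{\mathscr D}f)(v)=v(f)$ for a local section $v$ of $\mathscr D$; this is well defined because, writing $v=a(\alpha)$, the value $a(\alpha)(f)$ depends only on $a(\alpha)$ (it vanishes when $\alpha\in\ker a$). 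Hence $\partial_{\mathscr D}f=0$ precisely when $f$ is annihilated by every vector field tangent to $\mathcal F$, which for a regular holomorphic foliation is the same as $f$ being locally constant along the leaves; thus $\ker\partial_{\mathscr D}=\cO_{\mathcal F}$.

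Statement (ii) is local, so I would apply the holomorphic Frobenius theorem. Since $\mathscr D\subset\Theta_X$ is an involutive holomorphic subbundle of constant rank $k$, each point has a polydisc neighbourhood with holomorphic coordinates $(z_1,\dots,z_n)$ in which $\mathscr D$ is framed by $\partial/\partial z_1,\dots,\partial/\partial z_k$ and the leaves are the slices $z_{k+1}=\mathrm{const},\dots,z_n=\mathrm{const}$. There $\mathscr D^\ast$ is framed by $dz_1,\dots,dz_k$ and $\partial_{\mathscr D}=\sum_{i=1}^k dz_i\wedge\partial/\partial z_i$, so the complex becomes the holomorphic de Rham complex in $z_1,\dots,z_k$ with $z_{k+1},\dots,z_n$ entering only as holomorphic parameters. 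Exactness in positive degree then comes from the holomorphic Poincar\'e lemma: with $E=\sum_{i=1}^k z_i\,\partial/\partial z_i$ and $\phi_t(z_1,\dots,z_k,z_{k+1},\dots,z_n)=(tz_1,\dots,tz_k,z_{k+1},\dots,z_n)$, the operator $h\omega=\int_0^1\phi_t^\ast(\iota_E\omega)\,\frac{dt}{t}$ satisfies $\partial_{\mathscr D}h+h\partial_{\mathscr D}=\id$ on $p$-forms with $p\ge1$.

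The main obstacle is justifying this homotopy in the holomorphic category. First, $\phi_t$ moves only in the leaf directions and $\iota_E\omega$ involves only $dz_1,\dots,dz_k$, so $\phi_t^\ast(\iota_E\omega)$ is again a section of $\Lambda^{\bullet}\mathscr D^\ast$ and the identity stays within the truncated complex, producing no spurious $dz_j$ with $j>k$; the factor $t^{p}$ carried by $\phi_t^\ast(\iota_E\omega)$ makes the integrand $O(t^{p-1})$, hence integrable at $t=0$ for $p\ge1$. Second, holomorphicity of $h\omega$ is not immediate from the real integral over $t$, but the coefficients of $\phi_t^\ast(\iota_E\omega)$ are holomorphic in $z$ for each fixed $t$ and continuous in $t$, so one may differentiate under the integral sign (equivalently commute $\bar\partial$ with $\int_0^1$) to conclude that $h\omega$ is a holomorphic section. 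Granting these two points, the usual Cartan-homotopy computation, carried out with $z_{k+1},\dots,z_n$ treated as inert parameters, yields $\partial_{\mathscr D}h+h\partial_{\mathscr D}=\id$ and hence the claimed local exactness.

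For (iii) I would read (i) and (ii) as saying that the inclusion $\cO_{\mathcal F}\hookrightarrow\cO_X=\Lambda^0\mathscr D^\ast$ exhibits $\Lambda^\bullet\mathscr D^\ast$ as a resolution of $\cO_{\mathcal F}$, i.e.\ defines a quasi-isomorphism from the complex $\cO_{\mathcal F}$ placed in degree $0$ to $(\Lambda^\bullet\mathscr D^\ast,\partial_{\mathscr D})$. A quasi-isomorphism of bounded-below complexes of sheaves induces an isomorphism on hypercohomology, and the hypercohomology of a sheaf concentrated in degree $0$ is its ordinary cohomology, whence $H^k(X,\cO_{\mathcal F})\simeq\mathbb H^k(X,\Lambda^\bullet\mathscr D^\ast)$ for all $k\ge0$. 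I would emphasise that the $\Lambda^p\mathscr D^\ast$ are merely locally free, hence not acyclic in general, so the right-hand side must genuinely remain a hypercohomology; the isomorphism rests entirely on the resolution property from (i)--(ii) and not on any acyclicity.
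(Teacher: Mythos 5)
Your proposal is correct, and parts (i) and (iii) coincide in substance with the paper's argument (the paper likewise reduces to adapted coordinates $(z^1,\dots,z^m,y^1,\dots,y^{n-m})$ for claim (i), and treats (iii) as a formal consequence of the resolution property). For part (ii), however, you take a genuinely different route. The paper first identifies $(\Lambda^\bullet\mathscr D^\ast,\partial_{\mathscr D})$ with the complex $\Omega^\bullet_{\mathcal F}$ of foliated holomorphic forms, then resolves each $\Omega^p_{\mathcal F}$ by the smooth foliated Dolbeault sheaves $\Omega^{p,\bullet}_{\mathcal F}$ via $\bar\partial_{\mathcal F}$, and concludes by a standard zig-zag argument that $\Omega^\bullet_{\mathcal F}$ is quasi-isomorphic to the complex of smooth complex-valued foliated forms, whose exactness in positive degree is quoted from Vaisman. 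You instead prove the holomorphic leafwise Poincar\'e lemma directly, by exhibiting the explicit homotopy $h\omega=\int_0^1\phi_t^\ast(\iota_E\omega)\,\frac{dt}{t}$ with the transverse coordinates treated as inert holomorphic parameters. Your computation is sound: the identity $\Lie_E=\partial_{\mathscr D}\iota_E+\iota_E\partial_{\mathscr D}$ does hold on forms built from the leaf differentials only (the terms involving transverse differentials cancel), the factor $t^p$ makes the integral converge for $p\ge 1$, and holomorphicity of $h\omega$ follows from uniform convergence and Morera/differentiation under the integral sign --- the two points you single out are indeed the ones needing care, and you handle them correctly. What each approach buys: yours is self-contained and elementary, avoiding both the auxiliary Dolbeault-type resolution and the appeal to the smooth foliated de Rham theorem, at the price of the convergence and holomorphicity checks; the paper's is shorter on computation and slots the result into known machinery, but depends on an external reference and on the (unproved in the paper) assertion that $\Omega^\bullet_{\mathcal F}\to\Omega^{\bullet,\bullet}_{\mathcal F}$ is a resolution. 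Your closing caveat for (iii) --- that the $\Lambda^p\mathscr D^\ast$ are not acyclic, so the right-hand side must remain a hypercohomology --- is a worthwhile precision consistent with the paper's Remark \ref{compa}.
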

\noindent Here    $\mathbb H^\bullet$ denotes the hypercohomology functor.
\begin{proof} Let $\Omega^\bullet_{\mathcal F}$ be the complex obtained by modding out
the holomorphic de Rham complex $(\Omega_X^\bullet,\partial)$ by the kernel of the adjoint $a^\ast$ of the anchor map. For every $k$, we call
$\Omega^k_{\mathcal F}$ the sheaf of {\em $\mathcal F$-foliated holomorphic
differential forms} on $X$. There is a ``foliated'' $\partial$-operator
$\partial_{\mathcal F}\colon \Omega^\bullet_{\mathcal F}\to  \Omega^{\bullet+1}_{\mathcal F}$, and it turns out that the adjoint of the anchor establishes an isomorphism
of complexes $( \Lambda^\bullet\mathscr D^\ast,\partial_{\mathscr D})\simeq (\Omega^\bullet_{\mathcal F},
\partial_{\mathcal F})$. We can therefore show the exactness of the complex $ (\Omega^\bullet_{\mathcal F},\partial_{\mathcal F})$
(in positive degree). 
We can also introduce the sheaves $\Omega_{\mathcal F}^{p,q}$ of {\em smooth} $\mathcal F$-foliated differential forms on $X$
that are of Hodge type $(p,q)$ (in the usual sense). We have a differential
$\bar\partial_{\mathcal F}\colon \Omega_{\mathcal F}^{\bullet,\bullet}\to \Omega_{\mathcal F}^{\bullet,\bullet+1}$.

Now, around every point of $X$ there are holomorphic coordinates $(z^1,\dots,z^m,$\break $y^1,\dots,y^{n-m})$,
where $n=\dim_\C X$, and $m=\rk_\C\mathscr D$, such that the leaves of $\mathcal F$ are given by $y^i=\mbox{const}$, and the $z$'s are coordinates on the leaves. Since the exactness of the sheaf complex
$ (\Omega^\bullet_{\mathcal F},\partial_{\mathcal F})$ is a local matter, we may assume that
$X=Z\times Y$, while  identifying the leaves of $\mathcal F$ with the complex submanifolds
$Z\times\{y\}$ for $y\in Y$. Now $(z^1,\dots,z^m)$ and $(y^1,\dots,y^{n-m})$
are local coordinates in $Z$ and $Y$, respectively. Let us note that in these coordinates
the sections of $\Omega^k_{\mathcal F}$ are written as
$$\eta = \sum \eta_{i_1,.....,i_k}(z,y)\,dz^{i_1,}\wedge\dots\wedge dz^{i_k}$$
while a section of $\Omega^{p,q}_{\mathcal F}$  is written as
$$\tau = \sum \tau_{i_1,.....,i_p,j_1,\dots,j_q}(z,\bar z,y)\,dz^{i_1,}\wedge\dots\wedge dz^{i_p}
\wedge d\bar z^{j_1}\wedge\dots\wedge d\bar z^{j_q}.$$

Claim (i) now follows. Moreover, 
it is now   easy to show that the natural map 
$\Omega^\bullet_{\mathcal F}\to \Omega^{\bullet,\bullet}_{\mathcal F}$ 
is a resolution. A standard result in homological algebra  (see, e.g., \cite[Lemma 8.5]{Voisin})
shows that the complex $\Omega^\bullet_{\mathcal F}$ is quasi-isomorphic 
to the complex of smooth complex-valued $\mathcal F$-foliated differential forms,
and as the latter is exact \cite[p.~215]{Vaisman73}, the former is exact (in positive degree) as well.

This show claim (ii).  Claim (iii) is a  straightforward consequence of the previous ones.\end{proof}

\subsection{Representations of Lie algebroids} A representation of a Lie algebroid $A\stackrel{a}{\to} T_M$ on a vector bundle
$E$ is a Lie algebroid morphism $\nabla\colon A\to\mathcal D(E)$, where 
$\mathcal D(E)\stackrel{\sigma}{\to} M $ is the Atiyah algebroid of $E$ \cite{Roub80,Mac05}. Therefore,
if $\alpha$, $s$ are sections of $A$ and $E$, respectively,
$\nabla(\alpha)$ acts on $s$; we shall denote by $\nabla_\alpha s$ this action.
The previous   abstract definition means that $\nabla$ satisfies the conditions
$$\nabla_{\{\alpha,\beta\}} = [\nabla_\alpha,\nabla_\beta],\qquad \sigma(\nabla(\alpha))=a(\alpha)\,.$$
When we have a representation of $A$ on $E$, we say as usual that $E$ is an $A$-module.

One can define a cohomology of the Lie algebroid with coefficients in $E$
by considering the twisted complex $C^\bullet_A(E)=\Gamma(\Lambda^\bullet A^\ast\otimes E^\ast)$
and defining a differential according to 
\begin{multline}\label{twdiff}
(\delta_E \xi)(\alpha_1,\dots,\alpha_{p+1},s) =
\sum_{i=1}^{p+1}(-1)^{i-1}\bigl[ a(\alpha_i)(\xi(\alpha_1,\dots,\hat\alpha_i,
\dots,\alpha_{p+1},s))  \\ - 
\xi(\alpha_1,\dots,\hat\alpha_i,\dots,\alpha_{p+1},\nabla_{\alpha_i}s)\bigr]
+ \sum_{i<j}(-1)^{i+j}
\xi(\{\alpha_i,\alpha_j\},\dots,\hat\alpha_i,\dots,\hat\alpha_j,\dots,\alpha_{p+1},s)
\end{multline}

\subsection{Matched pairs of Lie algebroids} We need the notion
of matched pair of Lie algebroids \cite{Lu97,Mac07,Mokri,Huebsch00,GSX}. We spell out
the definition in the case of real algebroids but similar constructions may be done
in the smooth complex or holomorphic cases. 
One says that $A$ and $B$ are a matched pair if $A$ is a $B$-module,
$B$ is an $A$-module, and
\newcommand{\lie}[2]{\{#1,\,#2\}}
\begin{gather}
[a(\alpha),b(\beta)] = -a\big(\nabla_\beta \alpha\big)+b\big(\nabla_\alpha \beta\big)
, \label{first} \\
\nabla_\alpha\lie{\beta_1}{\beta_2} = \lie{\nabla_\alpha \beta_1}{\beta_2} +
\lie{\beta_1}{\nabla_\alpha \beta_2} + \nabla_{\nabla_{\beta_2} \alpha}\beta_1 -
\nabla_{\nabla_{\beta_1} \alpha} \beta_2
, \label{second} \\
\nabla_\beta\lie{\alpha_1}{\alpha_2} = \lie{\nabla_\beta \alpha_1}{\alpha_2} +
\lie{\alpha_1}{\nabla_\beta \alpha_2} + \nabla_{\nabla_{\alpha_2} \beta} \alpha_1 -
\nabla_{\nabla_{\alpha_1}\beta}\alpha_2 , \label{third}
\end{gather}

If $A$, $B$ is are a matched pair of Lie algebroids, the direct sum
$A\oplus B$ can be made into a 
a Lie algebroid   $A\bowtie B$ by defining its  anchor $c$ as $c(\alpha + \beta )=a(\alpha)+b(\beta)$ and a bracket as
$$ \lie{\alpha_1+ \beta_1}{\alpha_2+ \beta_2} = \big(
\lie{\alpha_1}{\alpha_2} + \nabla_{\beta_1}\alpha_2 - \nabla_{\beta_2}\alpha_1 \big) +
\big( \lie{\beta_1}{\beta_2} + \nabla_{\alpha_1}\beta_2 - \nabla_{\alpha_2}\beta_1 \big) .
$$ 
When these conditions are satisfied, we may consider the cohomology of $A$
with coefficients in the $A$-module $\Lambda^\bullet B^\ast$, and
specularly, the cohomology of $B$
with coefficients in the $B$-module $\Lambda^\bullet A^\ast$.
The rather cumbersome conditions (\ref{first}-\ref{third}) may be
neatly stated as the condition that the  two differentials  of these complexes anticommute  \cite{GSX}. 
Thus, in the case of a matching pair of Lie algebroids, we get a 
double complex. Moreover in \cite{GSX} it is shown that the cohomology
of the total complex is isomorphic to the cohomology of the matched
sum $A\bowtie B$ (with trivial coefficients).

\subsection{Skew-holomorphic Lie algebroids}
The authors of \cite{GSX} show that any holomorphic Lie algebroid on a complex manifold $X$ can be matched
to the Lie algebroid $T^{0,1}_X$, and use that fact to develop a cohomological
theory for holomorphic Lie algebroids. This can be generalized with very little extra
cost to  study  complex Lie algebroids obtained by matching a holomorphic Lie algebroid
with an anti-holomorphic one. We call this a {\em skew-holomorphic
structure.} Of course this generalizes what happens for the complexified tangent bundle
$T_{X,\C}$. In this section we develop some elements of the cohomology of this class
of Lie algebroids.

\begin{defin} A  complex Lie algebroid $A\stackrel{a}{\to} T^\C_X$ on a complex manifold $X$ is said to have a skew-holomorphic structure if 
\begin{enumerate} \item  there is a matched pair of Lie complex algebroids $A_1$, $A_2$ such that $A\simeq A_1\bowtie A_2$;
\item $A_1 \simeq \CC\otimes \A_1$ and $\bar A_2\simeq \CC\otimes\A_2$ 
(as complex Lie algebroids) 
for some holomorphic Lie algebroids  $\A_1$, $\A_2$.
\end{enumerate}\end{defin}
Note that these conditions imply that  the anchors $a_1$, $a_2$ of $A_1$, $A_2$ satisfy $a_1(A_1)\subset T^{1,0}X$, $a_2(A_2)\subset T^{0,1}X$.

\begin{remark} \label{thetanat}  If $\A$ is a holomorphic Lie algebroid,
then $A_1=\CC_X\otimes_{\cO_X}\A$ and $T^{0,1}_X$ are matched, and therefore
one gets a complex Lie algebroid $A=A_1\bowtie T^{0,1}_X$  with a skew-holomorphic structure.
This produces the theory developed \cite{GSX}, which is thus  is a special case of ours.  
 If  in addition $\A$ is the holomorphic tangent bundle $\Theta_X$, then $A$ is the complexified smooth tangent
bundle $T^{\C}_X$, and one gets de Rham theory.
More generally, given a complex Lie algebroid $A=A_1\bowtie A_2$
with a skew-holomorphic structure, the anchor $a_2\colon A_2\to T^{0,1}X$
defines a morphism of complex Lie algebroids $A\to A_1\bowtie T^{0,1}X$. This
will in turn define a morphism $H^\bullet(A_1\bowtie T^{0,1}X)\to H^\bullet(A)$.
\end{remark}

\bigskip
\section{Local cohomology of skew-holomorphic Lie algebroids}\label{cohom}
Let $A$ be a skew-holomorphic Lie algebroid on a complex manifold $X$, and
let us consider the sheaves
$$ \lambda_A^{p,q} = \Lambda^p A^\ast_1 \otimes \Lambda^q A^\ast_2 $$
with differentials
$$\partial_A\colon  \lambda_A^{p,q}\to \lambda_A^{p+1,q},\qquad \bar\partial_A\colon  \lambda_A^{p,q}\to \lambda_A^{p,q+1}\,.$$
Since the complex Lie algebroids 
$A$, $A_2$ are matched, $(\lambda^{\bullet,\bullet},\partial_A,\bar\partial_A)$ is a double complex \cite{GSX}.
The following result is easily shown (see also  \cite{GSX}, Proposition 4.6).
\begin{prop}\label{total}  The cohomology of the Lie algebroid $A$ is isomorphic to
the cohomology of the total complex of the double complex $(\Gamma(\lambda^{\bullet,\bullet}),\partial_A,\bar\partial_A)$.
\end{prop}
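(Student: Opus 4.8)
The plan is to identify the Lie algebroid $A = A_1 \bowtie A_2$ with its matched-pair structure and recognize that the cochain complex $\Gamma(\Lambda^\bullet A^\ast)$ with differential $d_A$ decomposes along the direct-sum splitting $A = A_1 \oplus A_2$. Since $A^\ast = A_1^\ast \oplus A_2^\ast$, the exterior algebra splits as $\Lambda^\bullet A^\ast = \bigoplus_{p,q} \Lambda^p A_1^\ast \otimes \Lambda^q A_2^\ast = \bigoplus_{p,q} \lambda_A^{p,q}$, so as a graded vector space the cochain complex of $A$ is literally the total complex of $\lambda_A^{\bullet,\bullet}$. The content of the proposition is therefore \emph{not} an isomorphism of abstract cohomologies obtained by some spectral sequence comparison, but rather the identification of $d_A$ with $\partial_A + \bar\partial_A$ on this bigraded module.

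First I would write out the Lie algebroid differential $d_A$ of $A \bowtie A_2$ using the explicit bracket and anchor $c(\alpha+\beta) = a_1(\alpha) + a_2(\beta)$ recalled in the matched-pair subsection, inserting these into the Chevalley--Eilenberg formula \eqref{diff}. The key step is to check that when $d_A$ is applied to a form of bidegree $(p,q)$, the result lands in $\lambda_A^{p+1,q} \oplus \lambda_A^{p,q+1}$ — that is, $d_A$ raises \emph{exactly one} of the two degrees by one. This is where the matching conditions \eqref{first}--\eqref{third} are used: the cross-terms involving $\nabla_\alpha \beta$ and $\nabla_\beta \alpha$ in the bracket of $A \bowtie A_2$ are precisely the terms that could have produced components of bidegree $(p+2,q-1)$ or $(p-1,q+2)$, and the matched-pair axioms guarantee these cancel against the anchor terms. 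I would then simply \emph{define} $\partial_A$ and $\bar\partial_A$ as the two bihomogeneous components of $d_A$; the relation $d_A = \partial_A + \bar\partial_A$ together with $d_A^2 = 0$ forces $\partial_A^2 = \bar\partial_A^2 = 0$ and $\partial_A \bar\partial_A + \bar\partial_A \partial_A = 0$, which is exactly the statement that $(\lambda_A^{\bullet,\bullet}, \partial_A, \bar\partial_A)$ is a double complex — already asserted in the lines preceding the proposition on the authority of \cite{GSX}.

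Once $d_A = \partial_A + \bar\partial_A$ is established on the bigraded module, the conclusion is immediate and essentially tautological: the total differential of the double complex $(\Gamma(\lambda_A^{\bullet,\bullet}), \partial_A, \bar\partial_A)$ is by definition $\partial_A + \bar\partial_A = d_A$, and the total complex as a graded object coincides degreewise with $C^\bullet_A = \Gamma(\Lambda^\bullet A^\ast)$. Hence the two complexes are identical, not merely quasi-isomorphic, and their cohomologies agree. This is why the authors describe the result as ``easily shown.'' I would close by remarking that the same identification is the content of \cite[Proposition 4.6]{GSX} in the special case $A_2 = T^{0,1}_X$, so in effect one only needs to observe that their argument used nothing about $T^{0,1}_X$ beyond its being the second member of a matched pair.

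The main obstacle, such as it is, lies entirely in the bidegree bookkeeping of the previous paragraph: one must verify carefully that no ``diagonal'' components of bidegree $(p+2,q-1)$ or $(p-1,q+2)$ survive. The temptation is to wave at the matched-pair axioms, but the honest check requires tracking each summand of \eqref{diff} under the $A \bowtie A_2$ bracket and confirming the cancellation. Since the almost-complex-structure subsection already exhibited a general $d_A$ splitting into four pieces $\partial'_A + \partial_A + \bar\partial_A + \partial''_A$, the real claim here is that the integrability built into the skew-holomorphic structure kills the two ``anomalous'' operators $\partial'_A$ and $\partial''_A$, leaving only the bidegree-$(1,0)$ and $(0,1)$ pieces. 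Establishing that vanishing is the crux; everything downstream is formal.
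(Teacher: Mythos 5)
Your proposal is correct and is essentially the argument the paper has in mind: the paper gives no proof beyond ``easily shown (see \cite{GSX}, Proposition 4.6)'', and the content is precisely your identification $d_A=\partial_A+\bar\partial_A$ on the bigraded module $\Gamma(\lambda_A^{\bullet,\bullet})$, after which the two complexes coincide on the nose and the isomorphism is tautological. One correction, though, to your account of what you call the crux. The vanishing of the anomalous bidegree-$(2,-1)$ and $(-1,2)$ components is \emph{not} obtained by a cancellation between the bracket cross-terms and the anchor terms enforced by the axioms (\ref{first})--(\ref{third}); no cancellation occurs at all. It is automatic from the shape of the $\bowtie$ bracket: $\{\Gamma(A_1),\Gamma(A_1)\}\subset\Gamma(A_1)$ and $\{\Gamma(A_2),\Gamma(A_2)\}\subset\Gamma(A_2)$ by construction, while the cross bracket $\{\alpha,\beta\}=-\nabla_\beta\alpha+\nabla_\alpha\beta$ ($\alpha\in\Gamma(A_1)$, $\beta\in\Gamma(A_2)$) has its $A_1$-part feeding only the $(0,1)$-component of $d_A$ and its $A_2$-part only the $(1,0)$-component; likewise each anchor term in \eqref{diff} deletes a single argument and so can change only one of the two degrees by one. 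This bookkeeping holds for \emph{any} bilinear $\nabla$'s, matched or not. The actual role of conditions (\ref{first})--(\ref{third}) is the one the paper states just before the proposition: given that $\partial_A$ and $\bar\partial_A$ each square to zero (being Chevalley--Eilenberg differentials twisted by genuine modules), the matching conditions are equivalent to $[\partial_A,\bar\partial_A]=0$, i.e.\ to $(\lambda_A^{\bullet,\bullet},\partial_A,\bar\partial_A)$ being a double complex, equivalently to $d_A^2=0$, i.e.\ to $A_1\bowtie A_2$ being a Lie algebroid at all. With that reallocation of where the hypotheses are used, your proof is complete, and, as you note, it uses nothing about $A_2$ beyond its being the second member of a matched pair.
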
 

Let us denote by $\A$ the sheaf of holomorphic sections of $A_1$. Moreover, we say
that $A_2$ is \emph{transitive} if $a_2\colon A_2\to T_X^{0,1}$ is surjective. 
\begin{lemma}\label{whentrans} If $A$ has a  skew-holomorphic structure, and  $A_2$ is transitive, then $\ker [\lambda_A^{p,0 } \stackrel{\bar\partial_A}{\to} \lambda_A^{p,1 }]\simeq \Omega_\A^p$.
\end{lemma}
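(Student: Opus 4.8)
The plan is to compute $\bar\partial_A$ explicitly on $\lambda_A^{p,0}=\Lambda^pA_1^\ast$ and read off its kernel locally, then patch. Since $A_1\simeq\CC_X\otimes_{\cO_X}\A$, we may identify $\lambda_A^{p,0}=\CC_X\otimes_{\cO_X}\Omega_\A^p$, so every local section is of the form $\eta=\sum_I f_I\otimes\omega^I$ with $f_I$ smooth and $\{\omega^I\}$ a local holomorphic frame of $\Omega_\A^p=\Lambda^p\A^\ast$. In the skew-holomorphic double complex $\bar\partial_A$ is the $(0,1)$-part of $d_A$ (the $A_2$-direction), so the formula recorded after Definition \ref{integ}, namely $d_A^{\,0,1}(f\otimes\xi)=a^\ast(\bar\partial f)\wedge\xi$ for holomorphic $\xi$, applies and gives $\bar\partial_A\eta=\sum_I a^\ast(\bar\partial f_I)\wedge\omega^I$. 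In particular, holomorphic sections (those with each $f_I$ holomorphic, or simply $f_I$ constant) lie in the kernel, which already yields the inclusion $\Omega_\A^p\hookrightarrow\ker\bar\partial_A$.

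Next I would extract coefficient-wise vanishing. Each term $a^\ast(\bar\partial f_I)\wedge\omega^I$ lives in $\lambda_A^{p,1}=A_2^\ast\otimes\Lambda^pA_1^\ast$, where $a^\ast(\bar\partial f_I)\in\Gamma(A_2^\ast)$ while the $\omega^I$ form a frame of $\Lambda^pA_1^\ast$; hence $\bar\partial_A\eta=0$ forces $a^\ast(\bar\partial f_I)=0$ for each $I$ separately. This is exactly where transitivity enters: the restriction of $a^\ast$ to $(T^{0,1}_X)^\ast$ is the dual $a_2^\ast$ of the anchor $a_2\colon A_2\to T^{0,1}_X$ (its $A_1^\ast$-component vanishes because $a_1(A_1)\subset T^{1,0}_X$), and surjectivity of $a_2$ is equivalent to injectivity of $a_2^\ast$. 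Therefore $a^\ast(\bar\partial f_I)=a_2^\ast(\bar\partial f_I)=0$ implies $\bar\partial f_I=0$, so each $f_I$ is holomorphic and $\eta$ is a holomorphic section of $\Lambda^p\A^\ast$. Combined with the inclusion above, this identifies $\ker\bar\partial_A$ with $\Omega_\A^p$ via the natural map $\Omega_\A^p\hookrightarrow\CC_X\otimes_{\cO_X}\Omega_\A^p$; since the computation is local and frame-independent it glues to an isomorphism of sheaves.

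The main obstacle I anticipate is not the algebra but pinning down two identifications. First, one must be sure that the $(0,1)$-differential of the skew-holomorphic double complex acts on $\lambda_A^{p,0}$ exactly through the formula after Definition \ref{integ}, so that $\bar\partial_A$ differentiates only the smooth coefficients $f_I$ while annihilating the holomorphic frame $\omega^I$. Second, one must verify that transitivity of $A_2$ is precisely the hypothesis making $a_2^\ast$ injective; without it, $a^\ast(\bar\partial f_I)=0$ imposes only the weaker, foliated condition that $\bar\partial f_I$ vanish on $a_2(A_2)\subsetneq T^{0,1}_X$, and the kernel would be strictly larger than $\Omega_\A^p$. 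Once these are in place, the frame argument and the passage from the local to the sheaf statement are routine, and the case $p=0$, where the claim reduces to $\ker[\CC_X\stackrel{\bar\partial_A}{\to}A_2^\ast]=\cO_X$, serves as a sanity check.
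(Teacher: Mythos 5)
Your proposal is correct and follows essentially the same route as the paper's proof: expand a section of $\lambda_A^{p,0}$ in a local holomorphic frame of $\Omega_\A^p$ with smooth coefficients, observe that $\bar\partial_A$ only differentiates the coefficients, and use surjectivity of $a_2$ (equivalently, injectivity of $a_2^\ast$) to conclude the coefficients are holomorphic. You make explicit two points the paper leaves implicit (the frame independence needed for coefficient-wise vanishing, and the equivalence of transitivity with injectivity of $a_2^\ast$), which is a useful clarification but not a different argument.
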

\begin{proof} If $f$ is a function, we have $\bar\partial_A(f)(\alpha)=a_2(\alpha)(f)$ for all $\alpha\in \Gamma (\lambda_A^{p,1 })$;
if $\bar\partial_A(f)=0$ and $a_2$ is surjective, $f$ is holomorphic. 
Let $\beta=\sum_i\alpha_i\otimes f_i$ be a section of $\lambda^{p,0}_A$. 
We may assume that the $\alpha_i$ are holomorphic (namely, they are sections of
$\Omega_\A^p$).
If $\bar\partial_A\beta=0$ then $\sum_i\alpha_i\otimes\bar\partial_Af_i=0$
which implies that the $f_i$ are holomorphic. Then $\beta=\sum_if_i\alpha_i\otimes 1$
is a section of $\Omega_A^p$.
\end{proof}

In general, without assuming that $A_2$ is transitive, let $\mathscr K^p=\ker [\lambda_A^{p,0 } \stackrel{\bar\partial_A}{\to} \lambda_A^{p,1 }]$. Since $\bar\partial_A(\Omega_\A^0)=\bar\partial_A(\cO_X)=0$, and $A_1 \simeq \CC\otimes \A$, we have an injection of complexes
$\Omega^\bullet_{\A}\to\mathscr K^\bullet$. We may picture the following diagram.

 \begin{equation*}\label{cplx2}\xymatrix{ & \dots \ar[d] & \dots  \ar[d]  &\dots  \ar[d]  \\
0\ar[r]  & \mathscr K^{k-1} \ar[r]\ar[d]    & \lambda_A^{k-1,0} \ar[r]\ar[d]     & \lambda_A^{k-1,1}  \ar[r]\ar[d]   & \dots \\
0\ar[r]  & \mathscr K^{k} \ar[r]\ar[d]        & \lambda_A^{k,0} \ar[r]\ar[d]        & \lambda_A^{k,1} \ar[r]\ar[d]        & \dots \\
0\ar[r]  & \mathscr K^{k+1} \ar[r]       \ar[d]     & \lambda_A^{k+1,0} \ar[r]         \ar[d]    & \lambda_A^{k+1,0}  \ar[r]\ar[d]   & \dots \\
& \dots & \dots & \dots 
}
\end{equation*}

\begin{defin} We say that $A$ satifies the $\partial_A$-Poincar\'e lemma ($ \bar\partial_A$-Poincar\'e lemma, resp.)  if for every $p$ the sheaf complex $( \lambda_A^{\bullet,p },\partial_A)$
($( \lambda_A^{p,\bullet },\bar\partial_A)$, resp.)
is exact in positive degree. 
\end{defin}

\begin{example} The complex Lie algebroids of Remark \ref{thetanat} 
 satifsfy the $ \bar\partial_A$-Poincar\'e lemma: this is just the exactness
 of the Dolbeault complex twisted by the holomorphic bundle $\A$. Moreover,
 in this case $A_2$ is obviously transitive.
 \end{example}
 
 The following theorem describes the main cohomological features
 of a complex Lie algebrod with a skew-holomorphic structure.

\begin{thm} \label{asimegusta} If $A$ satifies the $ \bar\partial_A$-Poincar\'e lemma, then
\begin{enumerate}
\item (generalized holomorphic de Rham theorem) there is an isomorphism $\mathbb H^p(X,\mathscr K^\bullet)\simeq H^p(A)$, where $\mathbb H^\bullet$ denotes hypercohomology.
\item (generalized Dolbeault theorem)  there are isomorphisms
$$H^p(X,\mathscr K^q) \simeq H^p(\Gamma(\lambda^{q,\bullet}_A),\bar\partial_A)$$
where $H^p(X,\mathscr K^q) $ denotes sheaf cohomology.
\end{enumerate}
If moreover $A$ satifies the $\partial_A$-Poincar\'e lemma, then
\begin{enumerate}  \setcounter{enumi}{2}
\item the sheaf complex $(\Lambda^\bullet A^\ast,d_A)$ is exact in positive degree
(i.e., there is a Poincar\'e lemma for the differential $d_A$);
\item (generalized de Rham theorem) there is an isomorphism $H^p(X,\mathscr F^\infty)\simeq H^p(A)$, where
$H^p(X,\mathscr F^\infty)$ is sheaf cohomology, and 
$\mathscr F^\infty=\ker[\CC_X\stackrel{d_A}{\to}A^\ast]$ is the sheaf of Casimir functions of $A$;
\item the  sheaf complex $(\mathscr K^\bullet,\partial_A)$ is exact in positive degree.
\end{enumerate}
Finally, if  additionally $A_2$ is    transitive,
\begin{enumerate}  \setcounter{enumi}{5}
\item there is an isomorphism $H^p(X,\mathscr F)\simeq H^p(A)$, where $\mathscr F=\ker[\cO_X\stackrel{\partial_A}{\to}\A^\ast]$ is the sheaf of Casimir functions of $\A$;
\item if $X$ is Stein, the cohomology groups $H^p(X,\mathscr F)$ (and therefore the groups $H^p(A)$)
are isomorphic to the cohomology groups of the complex of global sections
of $\Omega_\A^\bullet$.
\end{enumerate}
\end{thm}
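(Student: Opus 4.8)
The plan is to treat the double complex $(\Gamma(\lambda_A^{\bullet,\bullet}),\partial_A,\bar\partial_A)$ as the main computational device, feed the two Poincar\'e lemmas into it by standard resolution arguments, and reserve the genuinely new work for the combined $d_A$-lemma. I would first record that each $\lambda_A^{p,q}=\Lambda^pA_1^\ast\otimes\Lambda^qA_2^\ast$ is the sheaf of smooth sections of a complex vector bundle on the smooth manifold $X$, hence a fine, and therefore acyclic, sheaf of $\CC_X$-modules. The $\bar\partial_A$-Poincar\'e lemma then says that for each $p$ the column $0\to\mathscr K^p\to\lambda_A^{p,0}\xrightarrow{\bar\partial_A}\lambda_A^{p,1}\to\cdots$ is a fine resolution of $\mathscr K^p$. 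Statement (ii) is exactly the abstract Dolbeault isomorphism $H^p(X,\mathscr K^q)\simeq H^p(\Gamma(\lambda_A^{q,\bullet}),\bar\partial_A)$ for such a resolution. For (i), the bicomplex $\lambda_A^{\bullet,\bullet}$ is a resolution of the complex $\mathscr K^\bullet$ by acyclic sheaves (note $\partial_A$ does restrict to $\mathscr K^\bullet$, since $\bar\partial_A\partial_A=-\partial_A\bar\partial_A$ kills $\partial_A\mathscr K^p$), so the total complex of global sections computes $\mathbb H^\bullet(X,\mathscr K^\bullet)$; by Proposition~\ref{total} this same total complex is the Lie algebroid complex computing $H^\bullet(A)$, giving $\mathbb H^p(X,\mathscr K^\bullet)\simeq H^p(A)$.

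The heart of the matter is the combined Poincar\'e lemma (iii), which I would argue locally, exactness being a stalk statement. The column spectral sequence of the sheaf bicomplex, degenerate at $E_2$ by the $\bar\partial_A$-lemma, already shows that $(\Lambda^\bullet A^\ast,d_A)$ is locally quasi-isomorphic to $(\mathscr K^\bullet,\partial_A)$; thus (iii) and (v) amount to one and the same local exactness statement. To prove it I would reduce, via the regular-coordinate normal form underlying Proposition~\ref{locex}, to a local model in which the two differentials run along complementary holomorphic and anti-holomorphic directions, and then splice the contracting homotopies furnished by the two Poincar\'e lemmas into a single homotopy for $d_A$ (integrating first along the $\bar\partial_A$-directions, then along the $\partial_A$-directions); the finite ranks of $A_1,A_2$ bound the bicomplex and make the resulting homological-perturbation series terminate. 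This step is the main obstacle: because $A_1$ and $A_2$ form a \emph{matched} rather than a direct-product pair, $\partial_A$ and $\bar\partial_A$ carry the cross-representation terms and do not separate variables, so the two homotopies must be combined with care and a naive K\"unneth factorization is unavailable. Once this common local exactness is in hand, both (iii) and (v) follow at once, and (iv) is immediate: $(\Lambda^\bullet A^\ast,d_A)$ is then a fine resolution of $\mathscr F^\infty=\ker[\CC_X\xrightarrow{d_A}A^\ast]$, whence $H^p(X,\mathscr F^\infty)\simeq H^p(\Gamma(\Lambda^\bullet A^\ast),d_A)=H^p(A)$ by abstract de Rham.

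For the transitive case I would invoke Lemma~\ref{whentrans}: surjectivity of $a_2$ identifies $\mathscr K^p\simeq\Omega_\A^p$ compatibly with the differentials, so $(\mathscr K^\bullet,\partial_A)\simeq(\Omega_\A^\bullet,\partial_\A)$. By (v) this complex is exact in positive degree, hence a resolution of $\mathscr F=\ker[\cO_X\xrightarrow{\partial_A}\A^\ast]$; combining with (i) gives $H^p(A)\simeq\mathbb H^p(X,\mathscr K^\bullet)\simeq\mathbb H^p(X,\Omega_\A^\bullet)\simeq H^p(X,\mathscr F)$, which is (vi). Finally, when $X$ is Stein each $\Omega_\A^p=\Lambda^p\A^\ast$ is locally free of finite rank, hence coherent, hence acyclic by Cartan's Theorem~B; the hypercohomology spectral sequence $E_1^{p,q}=H^q(X,\Omega_\A^p)\Rightarrow\mathbb H^{p+q}(X,\Omega_\A^\bullet)$ therefore collapses onto its $q=0$ row, yielding $\mathbb H^p(X,\Omega_\A^\bullet)\simeq H^p(\Gamma(X,\Omega_\A^\bullet),\partial_\A)$. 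Together with (vi) this gives (vii), identifying $H^p(X,\mathscr F)$, and thus $H^p(A)$, with the cohomology of the complex of global sections of $\Omega_\A^\bullet$.
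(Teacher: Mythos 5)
Your overall architecture coincides with the paper's: (i) is obtained from the first spectral sequence of the sheaf double complex $(\lambda_A^{\bullet,\bullet},\partial_A,\bar\partial_A)$ (equivalently, by viewing it as an acyclic resolution of $\mathscr K^\bullet$) combined with Proposition~\ref{total}; (ii) and (iv) follow from the abstract de Rham theorem applied to fine resolutions; (vi) uses Lemma~\ref{whentrans} together with the quasi-isomorphism coming from (v); and (vii) collapses the hypercohomology spectral sequence of $\Omega_\A^\bullet$ by Cartan's Theorem~B on the Stein manifold $X$. All of this is correct and essentially identical to the printed proof.

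The divergence is at (iii)/(v). The paper disposes of (iii) with the word ``obvious'' and then deduces (v) from (iii) via the quasi-isomorphism $(\Lambda^\bullet A^\ast,d_A)\simeq(\mathscr K^\bullet,\partial_A)$; you instead observe, correctly, that (iii) and (v) are equivalent given that quasi-isomorphism, and that the common statement does not follow formally from exactness of the rows and columns of the bicomplex. You are right to flag this: a first-quadrant double complex can have all rows and all columns exact in positive degree while its total complex is not (e.g.\ $C^{0,0}=C^{1,0}=C^{0,1}=k$ with both differentials out of $C^{0,0}$ the identity and all other entries zero has $H^1$ of the total complex equal to $k$), and the two spectral sequences only yield $H^\bullet(\mathscr K^\bullet,\partial_A)\simeq H^\bullet(\mathscr L^\bullet,\bar\partial_A)$ with $\mathscr L^q=\ker[\lambda_A^{0,q}\to\lambda_A^{1,q}]$, not vanishing. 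However, your proposed repair does not close the gap. The reduction ``via the regular-coordinate normal form underlying Proposition~\ref{locex}'' is unavailable here: that normal form requires the anchor of a holomorphic algebroid to have constant rank, whereas Theorem~\ref{asimegusta} assumes only the two Poincar\'e lemmas, and in general $\partial_A$ and $\bar\partial_A$ admit no such product-type local model. Moreover, the Poincar\'e lemmas assert exactness of sheaf complexes, not the existence of contracting homotopies adapted to the bigrading, and even granting such homotopies the perturbation transfer onto $\mathscr K^\bullet$ only reproduces the quasi-isomorphism already known from the spectral sequence; it does not produce the vanishing of $H^{>0}(\mathscr K^\bullet,\partial_A)$. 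So you have correctly isolated the one step that needs genuine work --- a step the paper itself does not actually argue --- but the argument you sketch for it would fail as stated.
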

\begin{proof} (i) The $E_1$ term of the first spectral sequence of the double complex of sheaves
$(\lambda^{\bullet,\bullet},\partial_A,\bar\partial_A)$ is given by
$$\left\{\begin{array}{ccl} E_1^{p,0} & \simeq &  \mathscr K^p  \\
E_1^{p,q} &=&  0 \quad  \text {for\ } q>0.\end{array}\right.$$
So the spectral sequence degenerates at the second step,
and one has 
\begin{equation*}\label{E2} \left\{\begin{array}{ccl} E_2^{p,0} & \simeq &  \mathscr H^p(\mathscr K^\bullet) \\ E_2^{p,q} &=&  0 \quad  \text {for\ } q>0.\end{array}\right.\end{equation*}
This, together with Proposition \ref{total}, proves that
the composition $\mathscr K^\bullet \hookrightarrow \lambda_A^{\bullet,0} \hookrightarrow
\Lambda^\bullet A^\ast$ is a quasi-isomorphism between 
the complexes $(\Lambda^\bullet A^\ast,d_A)$ and $\mathscr K^\bullet$. Since the sheaves $\Lambda^\bullet A^\ast$ are fine, this yields point (i). 

Point (ii) follows
from the abstract de Rham theorem. 

Point (iii) is obvious. 

This also implies point (iv): $\Lambda^\bullet A^\ast$ is a (fine) resolution of
$\mathscr F^\infty$, so that the abstract de Rham theorem yields the claim.
Point (v) follows from (iii) and the quasi-isomorphism $(\Lambda^\bullet A^\ast,d_A)\simeq\mathscr K^\bullet$.  

(vi) Since $A_2$ is transitive we have $\mathscr K^\bullet\simeq \Omega_\A^\bullet$. 
On the other hand, by (v) the complex formed by $\mathscr F$ in degree zero
is quasi-isomorphic to $\Omega_\A^\bullet$. Therefore,
$$H^p(X,\mathscr F)\simeq\mathbb H^p(X,\Omega^\bullet_\A)\simeq H^p(A)\,.$$

(vii) Since the complex $\Omega_\A^\bullet$ is a resolution of $\mathscr F$,
there is a spectral sequence whose second term is $E_2^{p,q}=H^q(H^p(X,\Omega_\A^\bullet),\partial_A)$, which converges to $H^\bullet(X,\mathscr F)$. If $X$ is Stein  the only nonzero
terms in the second term are $E_2^{0,q}=H^q(\Omega_\A^\bullet(X),\partial_A)$,
whence the claim follows.
\end{proof}

\begin{remark} \label{compa} (i) Note that in points (i) to (v) of Theorem \ref{asimegusta} we do not need to assume that $A_2$ is transitive.

(ii)  If $A_2=T^{0,1}X$ with $a_2=\operatorname{id}$ (see Remark \ref{thetanat}), then
$\mathscr K^\bullet \simeq\Omega_\A^\bullet$, and   point (i) of Theorem \ref{asimegusta}
yields the result in \cite{GSX} (changing   their statement ``the cohomology of the complex
$\Omega_\A^\bullet$'' (which is zero in positive degree) into   ``the hypercohomology of the complex
$\Omega_\A^\bullet$'' ).

(iii) If furthermore $A_1=T^{1,0}X$, with $a_1=\operatorname{id}$, so that
$A$ is the complex de Rham algebroid, point (ii) of Theorem \ref{asimegusta} is Dolbeault theorem,
point (iv) is de Rham theorem,  and
point (i) is the holomorphic de Rham theorem, see \cite{Voisin}.
\end{remark}

\bigskip
\section{Examples}\label{examples}
\subsection{Holomorphic Poisson structures}\label{holoPoisson}
This example has been already considered in \cite{GSX}, however
we briefly describe it here for the sake of completeness. Let $X$ be a complex manifold,
and $P$ a holomorphic Poisson tensor, so that the holomorphic cotangent bundle
$\Omega^1_X$ with the anchor $P\colon \Omega^1_X\to\Theta_X$ is a holomorphic
Lie algebroid. As already discussed, we can match this algebroid with the
Lie algebroid naturally associated to the bundle $T_X^{0,1}$, getting a complex Lie algebroid with skew-holomorphic structure $A$. Theorem \ref{asimegusta}
and Lemma \ref{whentrans} yield the isomorphisms
$$\mathbb H^p(X,\mathscr V_P^\bullet)\simeq H^p(A), \qquad H^p(X,\mathscr V_P^q) \simeq H^p(\Gamma(\lambda^{q,\bullet}_A),\bar\partial_A)$$
where $\mathscr V^\bullet_P$ is the sheaf complex of holomorphic multivector fields
with the differential given by the Poisson tensor $P$.    The first of these isomorphisms
describes the relationship between the coholomogy of a holomorphic Poisson
manifold and the cohomology of the underlying smooth Poisson manifold.

If $X$ is Stein, the hypercohomology 
$\mathbb H^\bullet(X,\mathscr V_P^\bullet)$ is isomorphic to the cohomology of the complex
of global sections of $\mathscr V^\bullet_P$.

\subsection{Holomorphic tangential Lichnerowicz-Poisson cohomology}\label{tangentialPoisson}
Let $P$ be a regular holomorphic Poisson tensor on a complex manifold $X$ (i.e., the rank
of the complex linear map $P_x\colon (\Theta_X^\ast)_x\to(\Theta_X)_x$ does not depend on $x$).
Setting $\A=\Omega^1_X/\ker P$ one gets an exact sequence of holomorphic vector bundles
$$ 0 \to \ker P \to \Omega^1_X \to \A \to 0\,.$$
Moreover the bracket defined by $P$ on the local sections of $\Omega^1_X$
descends to a bracket on $\A$, and one has a morphism $\tilde a\colon \A\to\Theta_X$.
This defines a holomorphic Lie algebroid $\A\stackrel{\tilde a}{\to}\Theta_X$
(this is of course a special case of the situation described in Section \ref{tangential}).
We call the hypercohomology of the associated complex $\Omega_\A^\bullet$
the \emph{holomorphic tangential Lichnerowicz-Poisson cohomology}.
Let $A = [\CC_X\otimes\A]\bowtie T^{0,1}_X$. Then $A$ satisfies
the $\bar\partial_A$-Poincar\'e lemma, and $A_2$ is transitive. Moreover, 
 by Proposition \ref{locex} $A$ satisfies
the $\partial_A$-Poincar\'e lemma, so that $A$ satisfies all properties
required in Theorem \ref{asimegusta}. We have therefore isomorphisms
$$\mathbb H^p(X,\Omega_\A^\bullet)\simeq H^p(X,\mathscr F)\simeq H^p(A)\,.$$
If $X$ is Stein, these groups also coincide with the groups $H^p(\Omega_\A^\bullet(X),\partial_A)$.

\subsection{Skew-holomorphic Poisson structures} Let suppose that
on a complex manifold $X$ we have two holomorphic Poisson tensors $P_1$ and $P_2$. 
The vector bundles $\Omega_X^{1,0}$ and  $\Omega_X^{0,1}$, equipped with the brackets
given by the Poisson tensor $P_1$, and the complex conjugate Poisson tensor $\bar P_2$,
respectively, give rise to complex Lie algebroids $A_1$, $A_2$, with anchors
$P_1\colon A_1\to T^{1,0}_X$, $\bar P_2\colon A_2 \to T^{0,1}_X$. Each algebroid carries
a representation of the other by letting
\begin{equation}\label{cross}\nabla_\alpha\beta = \partial_{P_1(\alpha)}(\beta),\qquad
\nabla_\beta\alpha = \bar\partial_{\bar P_2(\beta)}(\alpha)\end{equation}
if $\alpha\in\Omega^{1,0}(X)$, $\beta\in\Omega^{0,1}(X)$.
\begin{prop} \label{matchingPoisson}
The Lie algebroids $A_1$ and $A_2$, with the module structures given by equation
\eqref{cross}, form a pair of matched Lie algebroids. The matched algebroid
$A=A_1\bowtie A_2$ is a Lie algebroid with skew-holomorphic structure, whose
underlying vector bundle is the  complexified smooth cotangent bundle of $X$. \end{prop}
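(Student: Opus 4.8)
The plan is to check, in order, that $A_1$ and $A_2$ are complex Lie algebroids, that each of the two actions in \eqref{cross} is a representation, and that the pair satisfies the matched-pair conditions \eqref{first}--\eqref{third}; the skew-holomorphic structure and the shape of the total algebroid then follow formally. The decisive simplification throughout is to reduce every identity to the \emph{holomorphic generators}, i.e.\ to $\alpha$ a holomorphic $(1,0)$-form and $\beta$ an anti-holomorphic $(0,1)$-form, and to exploit that on such generators the operators in \eqref{cross} become genuine Lie derivatives.

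First I would recall that a holomorphic Poisson tensor turns the cotangent bundle into a Lie algebroid via the Koszul--Lichnerowicz bracket; applying this to $P_1$ on $\Omega^{1,0}_X$ and to the conjugate tensor $\bar P_2$ on $\Omega^{0,1}_X$ produces $A_1$ and $A_2$, with $A_1\simeq\CC_X\otimes_{\cO_X}\A_1$ and $\bar A_2\simeq\CC_X\otimes_{\cO_X}\A_2$, where $\A_1,\A_2$ are the holomorphic algebroids of $P_1,P_2$. For the symbol I would note that $\nabla_\alpha\beta=\iota_{P_1(\alpha)}\partial\beta$ for all $\beta\in\Omega^{0,1}$ (the term $\partial\iota_{P_1(\alpha)}\beta$ drops out, since a $(1,0)$-field contracts a $(0,1)$-form to zero), whence the Leibniz rule gives $\sigma(\nabla_\alpha)=P_1(\alpha)=a_1(\alpha)$, and symmetrically $\sigma(\nabla_\beta)=\bar P_2(\beta)=a_2(\beta)$.

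The structural heart is the observation that, on holomorphic generators, $\nabla_\alpha=\mathcal L_{P_1(\alpha)}$ and $\nabla_\beta=\mathcal L_{\bar P_2(\beta)}$, where $\mathcal L$ is the Lie derivative, $P_1(\alpha)$ is a \emph{holomorphic} $(1,0)$-field and $\bar P_2(\beta)$ is an \emph{anti-holomorphic} $(0,1)$-field. Two facts then make the verification collapse: such a holomorphic field and such an anti-holomorphic field commute, so $[a_1(\alpha),a_2(\beta)]=0$; and $\bar\partial\alpha=0$, $\partial\beta=0$, so the cross-actions $\nabla_\beta\alpha=\iota_{\bar P_2(\beta)}\bar\partial\alpha$ and $\nabla_\alpha\beta=\iota_{P_1(\alpha)}\partial\beta$ vanish. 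Consequently the flatness axioms reduce to $[\mathcal L_{V_1},\mathcal L_{V_2}]=\mathcal L_{[V_1,V_2]}$ together with the anchor being a bracket morphism; condition \eqref{first} becomes $0=0$; and in \eqref{second} and \eqref{third} every summand either contains a vanishing cross-action, or is $\nabla$ applied along a vanishing $\nabla_{\beta_j}\alpha$, or is $\nabla_\alpha$ of the bracket $\{\beta_1,\beta_2\}$, which is again a holomorphic generator (the generators form a sub-Lie-algebra) and hence is killed — so both identities read $0=0$.

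It remains to promote these generator-level identities to arbitrary smooth sections. Here I would use that every smooth section of $A_1$, resp.\ $A_2$, is a $\CC_X$-linear combination of holomorphic generators $\alpha$, resp.\ anti-holomorphic generators $\beta$, and that in each of \eqref{first}--\eqref{third} the non-$\CC_X$-linear pieces are exactly the explicit anchor-derivative corrections prescribed by the representation and Leibniz rules. A short computation shows these corrections cancel between the two sides, so that the difference of the two sides of each identity is $\CC_X$-multilinear, hence tensorial; being tensorial and vanishing on generators, it vanishes identically. This cancellation bookkeeping is the only place requiring genuine care, and I expect it to be the main obstacle; conceptually, however, everything is forced by the commutation of holomorphic and anti-holomorphic Hamiltonian fields. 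Finally, by the matched-sum construction, $A=A_1\bowtie A_2$ is a complex Lie algebroid with underlying bundle $\Omega^{1,0}_X\oplus\Omega^{0,1}_X=T^\ast_X\otimes\C$, anchor $c(\alpha+\beta)=P_1(\alpha)+\bar P_2(\beta)\in T^{1,0}X\oplus T^{0,1}X=T^\C_X$, and the identifications $A_1\simeq\CC_X\otimes_{\cO_X}\A_1$, $\bar A_2\simeq\CC_X\otimes_{\cO_X}\A_2$ exhibit the required skew-holomorphic structure.
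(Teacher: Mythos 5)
Your argument is correct, but it takes a genuinely different formal route from the paper's. The paper never verifies the axioms \eqref{first}--\eqref{third} directly: invoking Proposition 4.5 of \cite{GSX}, it replaces them by the single equivalent condition that the differential $\partial_1$ of $A_1$ twisted by $\Lambda^\bullet A_2^\ast$ commutes with the differential $\partial_2$ of $A_2$ twisted by $\Lambda^\bullet A_1^\ast$; since both operators obey a Leibniz rule, this is checked on decomposables $\mu\otimes\nu$ with $\mu$ holomorphic and $\nu$ antiholomorphic, where $\partial_1(\mu\otimes\nu)=\llbracket P_1,\mu\rrbracket\otimes\nu$ and $\partial_2(\mu\otimes\nu)=\mu\otimes\llbracket \bar P_2,\nu\rrbracket$ preserve (anti)holomorphicity, so commutativity is immediate. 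The decisive observation is the same in both proofs --- on holomorphic/antiholomorphic generators all cross-terms vanish because holomorphic $(1,0)$-data and antiholomorphic $(0,1)$-data do not interact --- but the paper buys the reduction to generators from the derivation property of $\partial_1,\partial_2$, whereas you buy it from tensoriality of the defect of each axiom. That step deserves one explicit caveat: the defect of \eqref{second} (and of \eqref{third}) is $\CC_X$-multilinear only \emph{modulo} \eqref{first} --- rescaling $\beta_1$ by $f$ produces the correction $-\bigl([a(\alpha),b(\beta_2)]+a(\nabla_{\beta_2}\alpha)-b(\nabla_\alpha\beta_2)\bigr)(f)\,\beta_1$, which vanishes precisely by \eqref{first} --- so you must establish \eqref{first} (whose defect is unconditionally tensorial) first and then quote it. With that ordering your proof is complete and self-contained, avoiding the reformulation via commuting twisted differentials at the price of the bookkeeping you yourself flagged.
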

\begin{proof} We need only to show that the matching conditions are satisfied. As we already noted,
according to Proposition 4.5 of \cite{GSX}, this is tantamount to the commutativity of
the differential $\partial_1$  of the Lie algebroid of $A_1$ twisted by $\Lambda^\bullet A_2^\ast$
with the differential  $\partial_2$ of $A_2$ twisted by $\Lambda^\bullet A_1^\ast$. 
By slightly generalizing the formulas in Proposition 4.25 of \cite{GSX}, we can write
$$ \partial_1(\mu\otimes\nu) = \llbracket P_1,\mu\rrbracket \otimes \nu 
+\sum_i (e_i\wedge\mu)\otimes \Lie_{P_1(e^{\ast i})}\nu$$
where $\mu \in \Gamma(\Lambda^\bullet A_1^\ast) = \Gamma(\Lambda^\bullet T_X^{1,0})$,
$\nu \in \Gamma(\Lambda^\bullet A_2^\ast) = \Gamma(\Lambda^\bullet T_X^{0,1})$,
$\{e_i\}$ is a local basis of sections of $T_X^{1,0}$,  $\{e^{\ast i}\}$  is the
dual basis, $\Lie$ is the Lie derivative, and $\llbracket\, , \, \rrbracket$ is the Schouten bracket. Analogously,
we have 
$$\partial_2(\mu\otimes\nu)  = \mu \otimes \llbracket \bar P_2,\nu\rrbracket
+ \sum_i \Lie_{\bar P_2(f^{\ast i})} \mu \otimes (f_i\wedge \nu)$$
where $\{f_i\}$ is a local basis of sections of $T_X^{0,1}$,  and $\{f^{\ast i}\}$
is the dual basis. Since both differentials obey a Leibniz rule, it is enough to verify their commutativity when $\mu$ is holomorphic, and $\nu$ is antiholomorphic. We thus obtain
$$\partial_1(\mu\otimes\nu) = \llbracket P_1,\mu \rrbracket \otimes\nu= d_1 \mu \otimes \nu $$
where $d_1$ is the differential of the (untwisted) Lichnerowicz-Poisson complex of $P_1$.
Analogously, 
$$\partial_2(\mu\otimes\nu) = \mu \otimes \llbracket \bar P_2,\nu\rrbracket = \mu \otimes d_2 \nu$$
where $d_2$ is the differential of the  Lichnerowicz-Poisson complex of $\bar P_2$.
Since $d_1\mu$ is again holomorphic, and $d_2\nu$ is again antiholomorphic, we have
$$ \partial_1\partial_2 (\mu\otimes\nu)  = d_1\mu\otimes d_2\nu = \partial_2\partial_1(\mu\otimes\nu).$$ 
\end{proof}
This results strengthens the remark already done in \cite{Cordero00}, where it is noted
that the Schouten bracket of $P_1$ and $\bar P_2$ vanishes, i.e., $P_1$ and $\bar P_2$ 
satisfy a bihamiltonian condition. However, as we discuss in \cite{UgoVolyBiham},
the matching pair condition is stronger than the bihamiltonian condition,
and the former indeed implies the latter.

In this generality, the skew-holomorphic Lie algebroid $A$ satisfies none of the  conditions
of Theorem \ref{asimegusta}.
If the Poisson tensor $P_2$ is nondegenerate, it establishes
a Lie algebroid isomorphism $\bar P_2\colon A_2 \to T^{0,1}_X$, where
$T^{0,1}_X$ is given its standard Lie algebroid structure. Thus we recover the example of
Section \ref{holoPoisson}.

\subsection{Skew-holomorphic tangential Poisson structures} We can mix the two previous
examples considering two regular holomorphic Poisson tensors $P_1$, $P_2$ and matching
the Lie algebroid obtained as in Section \ref{tangentialPoisson} from $P_1$ with the
complex conjugate of the one obtained from $P_2$. In this way we obtain a situation
where the results (i) to (v) of Theorem \ref{asimegusta} hold.

\section{Conclusions and Perspectives}

The theory we have developed in this paper provides  generalizations and simplifications of known results and their proofs (\cite{GSX}). Moreover, quite natural applications of the cohomology theory we develop in this paper may be found in connection with pairs of holomorphic Poisson structures
that satisfy some compatibility condition, as it happens for holomorphic bihamiltonian system \cite{UgoVolyBiham}. One can therefore foresee applications to holomorphic integrable systems. More generally, one could envisage applications to the study of complex manifolds equipped with a pair of compatible, possibly singular, holomorphic foliations.

This cohomology theory can be used also to study a  deformation theory for complex Lie algebroids,
and a relation of deformation conditions with the above-mentioned compatibility conditions.
Recently an example of this connection was proposed in the paper \cite{CarJoanSantos}, motivated by
some natural questions of Classical Mechanics. Our constructions, and their natural generalizations
to the ``matched'' algebroids of \cite{CarJoanSantos}, their dual algebroids and symplectic realizations
of the matched co-algebroids, may provide a useful toolbox and a transparent language to study the
``internal'' deformations and the compatibility conditions in the holomorphic case.

\frenchspacing

\end{document}